\documentclass[14pt]{amsart}

\usepackage{cases}
\usepackage{amsmath}
\usepackage{amsfonts}
\usepackage{bm}
\usepackage{amsfonts,amsmath,amssymb,amscd,bbm,amsthm,mathrsfs,dsfont}
\usepackage{mathrsfs}
\usepackage{pb-diagram}
\usepackage{amssymb}
\usepackage{xypic}


\newtheorem{Theorem}{Theorem}[section]
\newtheorem{Lemma}[Theorem]{Lemma}

\newtheorem{Definition}[Theorem]{Definition}

\newtheorem{Proposition}[Theorem]{Proposition}

\newtheorem{Conjecture}[Theorem]{Conjecture}

\date{version of \today}

\setlength{\textwidth}{15.3cm} \setlength{\textheight}{23cm}
\setlength{\topmargin}{-0.0cm} \setlength{\oddsidemargin}{-1mm}
\setlength{\evensidemargin}{-1mm} \setlength{\abovedisplayskip}{3mm}
\setlength{\belowdisplayskip}{3mm}
\setlength{\abovedisplayshortskip}{0mm}
\setlength{\belowdisplayshortskip}{2mm} \normalbaselines
\raggedbottom

\title
[Positivity of denominator vectors of cluster algebras]
{Positivity of denominator vectors of cluster algebras}

\author{Peigen Cao $\;\;\;\;\;\;$ Fang Li $\;\;\;\;\;\;$}
\address{Peigen Cao
\newline Department
of Mathematics, Zhejiang University (Yuquan Campus), Hangzhou, Zhejiang
310027,  P.R.China}
\email{peigencao@126.com}

\address{Fang Li
\newline Department
of Mathematics, Zhejiang University (Yuquan Campus), Hangzhou, Zhejiang
310027, P.R.China}
\email{fangli@zju.edu.cn}

\begin{document}
\renewcommand{\thefootnote}{\alph{footnote}}

\renewcommand{\thefootnote}{\alph{footnote}}
\setcounter{footnote}{-1} \footnote{\emph{ Mathematics Subject
Classification(2010)}:  13F60, 05E40}
\renewcommand{\thefootnote}{\alph{footnote}}
\setcounter{footnote}{-1} \footnote{ \emph{Keywords}: skew-symmetric cluster algebra,  denominator vector, positivity.}

\begin{abstract}
In this paper, we prove that positivity of denominator vectors holds for any skew-symmetric cluster algebra.
\end{abstract}

\maketitle
\bigskip

\section{introduction}

Cluster algebras were introduced by Fomin and Zelevinsky in \cite{FZ}. The motivation was to create a common framework for phenomena occurring in connection with total positivity and canonical bases. A cluster algebra $\mathcal A(\mathcal S)$ of rank $n$ is a subalgebra of an ambient field $\mathcal F$ generated by certain combinatorially defined generators (i.e.,  {\em cluster variables}), which are grouped into overlapping {\em clusters}. One of the important features of cluster algebras is their Laurent phenomenon. Thus  the {\em denominator vectors} of cluster variables can be defined  following this phenomenon in Section 2.  Fomin and Zelevinsky conjectured that
\begin{Conjecture}\label{conjd}
{\em (Conjecture 7.4 (i) in \cite{FZ3})}\; Positivity of denominator vectors) For any given a cluster ${\bf x}_{t_0}$ of a cluster algebra  $\mathcal A(\mathcal S)$, and any cluster variable  $x\notin {\bf x}_{t_0}$,  the denominator vector  $d^{t_0}(x)$ of  $x$ with respect to ${\bf x}_{t_0}$ is in $\mathbb N^n$.
\end{Conjecture}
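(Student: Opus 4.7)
The plan is to reduce Conjecture~\ref{conjd} to a statement about $F$-polynomials via Fomin--Zelevinsky's separation of additions, and then to exhibit, for each coordinate $i$, a witness monomial in the relevant $F$-polynomial using representation-theoretic input available in the skew-symmetric case.

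First I would reduce to principal coefficients at $t_0$: since $d$-vectors are coefficient-free invariants, replacing the coefficient pattern by principal coefficients does not change them. In this setting the Fomin--Zelevinsky separation formula gives
\[
x_l^t \;=\; x^{g_l^t}\,F_l^t(\hat y),\qquad \hat y_j \;=\; \prod_i x_i^{b_{ij}},
\]
where $B=(b_{ij})=B_{t_0}$, $g_l^t$ is the $g$-vector, and $F_l^t$ is the $F$-polynomial. Expanding this into Laurent monomials and reading off the common denominator yields
\[
d_i^{t_0}(x_l^t) \;=\; -\,g_{l,i}^t \;-\; \min_{a\in\mathrm{supp}(F_l^t)}\sum_j b_{ij}\,a_j,
\]
so it suffices to produce, for each $i$, an exponent vector $a\in\mathrm{supp}(F_l^t)$ satisfying $g_{l,i}^t+\sum_j b_{ij}a_j\leq 0$.

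The approach I would pursue is to exhibit such witnesses via the categorification of skew-symmetric cluster algebras by quivers with potentials (Derksen--Weyman--Zelevinsky). Each cluster variable $x_l^t$ is attached to a decorated representation $\mathcal M_l$ of the Jacobian algebra of the QP associated with $B_{t_0}$; the $F$-polynomial $F_l^t$ is the generating function for dimensions of quiver Grassmannians of $\mathcal M_l$, and the $g$-vector admits a canonical module-theoretic description. Under this identification, one can match $d_i^{t_0}(x_l^t)$ with the dimension of $\mathcal M_l$ at vertex $i$ (for $x_l^t$ outside the initial cluster, so that the decoration at $i$ does not enter), which is non-negative by construction.

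The hard part is that the initial seed $t_0$ is arbitrary, so the associated QP need not be acyclic and one cannot import the Caldero--Keller picture wholesale. Overcoming this requires tracking the mutation of decorated representations along the entire sequence from $t_0$ to $t$ and verifying that each mutation step preserves the two identifications ``$F$-polynomial = Grassmannian generating function'' and ``$d$-vector = dimension vector''; sign-coherence of $c$-vectors in the skew-symmetric case is the essential input here. A back-up strategy is a direct induction on the distance from $t_0$ in the exchange graph via the mutation rule for $F$-polynomials, but the subtraction $-d(x_k)$ in the $d$-vector mutation rule makes a naive induction fail, so one would still need a global argument of the same flavour as the categorification to control the sign of the resulting quantities.
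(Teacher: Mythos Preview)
Your approach is genuinely different from the paper's, but it has a real gap at the decisive step. The heart of your argument is the identification of $d^{t_0}(x_l^t)$ with the dimension vector of the Derksen--Weyman--Zelevinsky module $\mathcal M_l$; you assert this and then propose to ``track the mutation of decorated representations'' to propagate it from the acyclic case. The difficulty is that the $d$-vector mutation rule (Proposition~\ref{dmutation}) is a componentwise $\max$, whereas DWZ mutation changes $\dim(\mathcal M_l)_k$ by a formula involving the ranks of the incoming and outgoing linear maps at $k$; there is no a priori reason these coincide step by step, and sign-coherence of $c$-vectors does not supply the missing equality. In other words, the statement ``$d$-vector $=$ dimension vector'' is at least as strong as the positivity you are trying to prove, and it was not a known consequence of the DWZ framework beyond the acyclic situation handled by Caldero--Keller. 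Your formula $d_i=-g_{l,i}^t-\min_{a}\sum_j b_{ij}a_j$ is correct, but producing the required witness $a\in\mathrm{supp}(F_l^t)$ for each $i$ is exactly the content of the conjecture, and nothing in your outline explains how to find it for a general (possibly non-acyclic) initial seed.

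For comparison, the paper avoids categorification entirely. It inducts on the distance $dist(z,{\bf x}_{t'})$ in $\mathbb T_n$; Proposition~\ref{prodvectors} reduces the inductive step to controlling a single coordinate of $d^{t'}(z)$ after one mutation, and the key external input is the Lee--Schiffler structural decomposition (Theorem~\ref{Lee}). Writing $z=P_t+Q_t$ with the pieces lying in explicit positive Laurent rings yields, for each direction $e\neq p,q$, a Laurent monomial in which $x_{e;t}$ has nonnegative exponent (Lemma~\ref{lempq}), and this disposes of the case $k_{m+1}\neq p$. The remaining case $k_{m+1}=p$ is handled by isolating the maximal rank-two tail of the mutation sequence and combining rank-two $d$-vector positivity with a distance argument (Lemma~\ref{lempositive}) and the finer four-term decomposition of Theorem~\ref{Lee}(iii).
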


 In the sequel, we always write denominator vectors briefly as $d$-vectors.

Positivity of $d$-vectors has been affirmed in the following cases:
 (i) cluster algebras of rank $2$ (Theorem 6.1 of \cite{FZ}), (ii) cluster algebras arising from surfaces (see \cite{FST}), and (iii)  cluster algebras of finite type (see \cite{CCS,CP}).

  Caldero and  Keller proved a weak version of Conjecture \ref{conjd} in \cite{CK}, that is, for a skew-symmetric cluster algebra  $\mathcal A(\mathcal S)$, if its seed $({\bf x}_{t_0},{\bf y}_{t_0},B_{t_0})$ is acyclic, then the  $d$-vector $d^{t_0}(x)$ of any cluster variable $x\notin {\bf x}_{t_0}$ is in $\mathbb N^n$.

In this paper, we will give a positive answer to Conjecture \ref{conjd} for skew-symmetric cluster algebras.

\begin{Theorem}\label{mainthm}
Positivity of $d$-vectors holds for any skew-symmetric cluster algebra.
\end{Theorem}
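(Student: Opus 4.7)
The plan is to categorify the cluster algebra via quivers with potential and identify the denominator vector of each cluster variable $x \notin \mathbf{x}_{t_0}$ with the dimension vector of an associated finite-dimensional representation. Since dimension vectors are automatically non-negative, this yields the desired positivity.

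First, I would fix an initial seed $(\mathbf{x}_{t_0}, \mathbf{y}_{t_0}, B_{t_0})$ with associated quiver $Q$, and choose a non-degenerate potential $W$ on $Q$ (whose existence is due to Derksen-Weyman-Zelevinsky). The Jacobian algebra $J(Q,W)$ and the generalized cluster category $\mathcal{C}_{(Q,W)}$ of Amiot-Plamondon then organize the representation theory underlying $\mathcal{A}(\mathcal{S})$. To every cluster variable $x$, one attaches (via Plamondon) a reachable rigid indecomposable object $T_x \in \mathcal{C}_{(Q,W)}$ whose image $M_x$ is a finite-dimensional $J(Q,W)$-module, together with a Caldero-Chapoton-type expansion
\[
 x \;=\; \mathbf{x}_{t_0}^{\,g(T_x)} \sum_{e\in\mathbb{N}^n} \chi\bigl(\mathrm{Gr}_e(M_x)\bigr)\, \hat{\mathbf{y}}_{t_0}^{\,e}, \qquad \hat{y}_j = y_j \prod_i x_i^{b^{t_0}_{ij}}.
\]

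Second, I would invoke the positivity of $F$-polynomial coefficients in the skew-symmetric setting (known from the work of Lee-Schiffler and Gross-Hacking-Keel-Kontsevich), so that each $\chi(\mathrm{Gr}_e(M_x))$ is a non-negative integer. Expanding the $\hat{y}_j$'s, the Laurent expansion of $x$ in $\mathbf{x}_{t_0}$ has only non-negative integer coefficients and no cancellation among monomials, so its denominator vector can be read directly as
\[
 d^{t_0}_i(x) \;=\; -\min_{e \,\in\, \mathrm{supp}(F_{T_x})} \bigl(g(T_x) + B_{t_0}\, e\bigr)_i.
\]

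Third, I would prove the identity $d^{t_0}(x) = \underline{\dim}\, M_x$ for $x \notin \mathbf{x}_{t_0}$, which lies in $\mathbb{N}^n$ and therefore finishes the argument. In the acyclic case this reduces to the Caldero-Keller result already cited. The main obstacle, and the real content of the theorem, is precisely this identification in the non-acyclic setting, where the combinatorics of $\mathcal{C}_{(Q,W)}$ is more delicate and one cannot rely on an honest module category over a hereditary algebra. The strategy I would pursue is an induction on the length of a mutation sequence from $\mathbf{x}_{t_0}$ to a seed containing $x$: the base case is an immediate computation from the exchange relation, and the inductive step uses the compatibility of Plamondon's categorification with mutation together with the rigidity of $T_x$ to control how the extremal dimension vector in $\mathrm{supp}(F_{T_x})$ transforms. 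Verifying that the extremum is always attained on a submodule $N \subseteq M_x$ with $(\underline{\dim}\, N)_i = (\underline{\dim}\, M_x)_i$ is where the bulk of the technical work will sit, and where the interplay between the rigidity of $T_x$ and positivity of the Euler characteristics becomes essential.
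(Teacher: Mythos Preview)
Your approach is genuinely different from the paper's, which never invokes categorification. The paper argues directly with Laurent expansions: by Proposition~\ref{prodvectors} (Reading--Stella), a single mutation can alter only one component of the $d$-vector, so an induction on $dist(z,\mathbf{x}_{t'})$ reduces everything to controlling that one component. The paper then splits into two cases depending on whether the last mutation direction repeats an earlier one, and handles them using the fine structure of Lee--Schiffler's positivity machinery (Theorem~\ref{Lee}): the decomposition $z=P_t+Q_t$ and its four-term refinement, together with a rank-two argument (Lemma~\ref{lempositive}) that rules out negative components along a terminal $\{p,q\}$-mutation subsequence.

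Your proposal, by contrast, has a genuine gap at exactly the point you yourself identify as ``the main obstacle'': the equality $d^{t_0}(x)=\underline{\dim}\,M_x$ for non-initial cluster variables over a general (non-acyclic) quiver with potential. This identity was \emph{not} available as an input at the time of the paper; establishing it is at least as hard as the positivity you are trying to prove, and your sketch does not close the distance. Concretely, from your displayed formula $d^{t_0}_i(x)=-\min_{e}(g(T_x)+B_{t_0}e)_i$ you would need two things for each $i$: a lower bound $(g(T_x)+B_{t_0}e)_i\ge -(\underline{\dim}\,M_x)_i$ valid for \emph{every} $e\in\mathrm{supp}(F_{T_x})$, and a witness $e$ attaining equality. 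Neither follows from rigidity of $T_x$ or positivity of Euler characteristics alone. The lower bound is essentially a restatement of the conjecture, and your proposed induction on mutation length does not help: under mutation of the cluster-tilting object the module $M_x$ changes via approximation triangles whose effect on $\underline{\dim}\,M_x$ involves uncontrolled $\mathrm{Ext}^1$-dimensions and does not visibly match the $d$-vector recursion of Proposition~\ref{dmutation}. In short, the proposal relocates the difficulty into a categorical statement of equal or greater strength without supplying the mechanism to prove it.
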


This paper is organized as follows. In Section 2, some basic definitions and notations are introduced. In Section 3, the proof of Theorem \ref{mainthm} is given.

\section{Preliminaries}

Recall that $(\mathbb P, \oplus, \cdot)$ is a {\bf semifield } if $(\mathbb P,  \cdot)$ is an abelian multiplicative group endowed with a binary operation of auxiliary addition $\oplus$ which is commutative, associative, and distributive with respect to the multiplication $\cdot$ in $\mathbb P$.

Let $Trop(u_i:i\in I)$ be a free abelian group generated by $\{u_i: i\in I\}$ for a finite set of index $I$. We define the addition $\oplus$ in $Trop(u_i:i\in I)$ by $\prod\limits_{i}u_i^{a_i}\oplus\prod\limits_{i}u_i^{b_i}=\prod\limits_{i}u_i^{min(a_i,b_i)}$, then $(Trop(u_i:i\in I), \oplus)$ is a semifield, which is called a {\bf tropical semifield}.

The multiplicative group of any semifield $\mathbb P$ is torsion-free for multiplication \cite{FZ}, hence its group ring $\mathbb Z\mathbb P$ is a domain.
We take an ambient field $\mathcal F$  to be the field of rational functions in $n$ independent variables with coefficients in $\mathbb Z\mathbb P$.

 An $n\times n$ integer matrix $B=(b_{ij})$ is called {\bf skew-symmetric } if $b_{ij}=-b_{ji}$ for any $i,j=1,\cdots,n$; and in general,  $B$ is called  {\bf skew-symmetrizable} if there exists a positive integer diagonal matrix $S$  such that $SB$ is skew-symmetric.

  For  a skew-symmetrizable matrix $B$, we can encode the sign pattern of matrix entries of $B$ by the directed graph $\Gamma(B)$ with the vertices $1,2,\cdots,n$ and the directed edges $(i,j)$ for $b_{ij}>0$. A skew-symmetrizable matrix matrix $B$ is called {\bf acyclic} if $\Gamma(B)$ has no oriented cycles.

\begin{Definition}
A {\bf seed}  in $\mathcal F$ is a triplet $({\bf x},{\bf y},B)$ such that

(i)  ${\bf x}=\{x_1,\cdots, x_n\}$,  called a {\bf cluster}, is a free generating set of $\mathcal F$, where $x_1\cdots,x_n$  are called {\bf cluster variables}.

(ii) ${\bf y}=\{y_1,\cdots,y_n\}$ is a subset of $\mathbb P$,  where  $y_1,\cdots,y_n$ are called {\bf coefficients}.

(iii) $B=(b_{ij})$ is a $n\times n$ skew-symmetrizable  integer matrix, called an {\bf exchange matrix}.

The seed $({\bf x},{\bf y},B)$ is called {\bf acyclic} if $B$ is acyclic.
\end{Definition}

Denote by $[a]_+=max\{a,0\}$ for any $a\in \mathbb R$.
\begin{Definition}
Let $({\bf x},{\bf y},B)$ be a seed in $\mathcal F$. Define the {\bf mutation}  of the seed $({\bf x},{\bf y},B)$  at $k\in\{1,\cdots,n\}$ as a new triple $\mu_k({\bf x},{\bf y},B)=(\bar{\bf x},\bar{\bf y},\bar B)$ given by
\begin{eqnarray}
\label{eq1}\bar x_i&=&\begin{cases}x_i~,&\text{if } i\neq k\\ \frac{y_k\prod\limits_{j=1}^nx_j^{[b_{jk}]_+}+ \prod\limits_{j=1}^nx_j^{[-b_{jk}]_+}}{({1\bigoplus y_k})x_k},~& \text{if }i=k.\end{cases}\\
\bar y_i&=&\begin{cases} y_k^{-1}~,& i=k\\ y_iy_k^{[b_{ki}]_+}(1\bigoplus y_k)^{-b_{ki}}~,&otherwise.
\end{cases}\\
\bar b_{ij}&=&\begin{cases}-b_{ij}~,& i=k\text{ or } j=k;\\ b_{ij}+b_{ik}[-b_{kj}]_++[b_{ik}]_+b_{kj}~,&otherwise.\end{cases}.
\end{eqnarray}

\end{Definition}
It can be seen that $\mu_k({\bf x},{\bf y},B)$ is also a seed and $\mu_k(\mu_k({\bf x},{\bf y},B))=({\bf x},{\bf y},B)$.

\begin{Definition}
  A {\bf cluster pattern}  $\mathcal S$ in $\mathcal F$ is an assignment of a seed  $({\bf x}_t,{\bf y}_t,B_t)$ to every vertex $t$ of the $n$-regular tree $\mathbb T_n$, such that for any edge $t^{~\underline{\quad k \quad}}~ t^{\prime},~({\bf x}_{t^\prime},{\bf y}_{t^\prime},B_{t^\prime})=\mu_k({\bf x}_t,{\bf y}_t,B_t)$.
\end{Definition}
We always denote by ${\bf x}_t=\{x_{1;t},\cdots, x_{n;t}\},~ {\bf y}_t=\{y_{1;t},\cdots, y_{n;t}\}, ~B_t=(b_{ij}^t).$
\begin{Definition} Let  $\mathcal S$ be a cluster pattern,   the {\bf cluster algebra} $\mathcal A(\mathcal S)$  associated with the given cluster pattern $\mathcal S$ is the $\mathbb {ZP}$-subalgebra of the field $\mathcal F$ generated by all cluster variables of  $\mathcal S$.
 \end{Definition}
 \begin{itemize}
 \item If  $\mathcal S$ is cluster pattern with coefficients in $Trop(y_1,\cdots,y_m)$, the corresponding cluster algebra $\mathcal A(\mathcal S)$ is said to be a cluster algebra of {\bf geometric type}.
 \item If  $\mathcal S$ is cluster pattern with coefficients in $Trop(y_1,\cdots,y_n)$ and there exists a seed $({\bf x}_{t_0},{\bf y}_{t_0},B_{t_0})$ such that $y_{i;t_0}=y_i$ for $i=1,\cdots,n$, then  the corresponding cluster algebra  $\mathcal A(\mathcal S)$ is called a {\bf cluster algebra with principal coefficients at $t_0$}.
\end{itemize}

\begin{Theorem} \label{laurent} Let $\mathcal A(\mathcal S)$ be a skew-symmetrizable cluster algebra, and $({\bf x}_{t_0},{\bf y}_{t_0},B_{t_0})$ be a  seed of $\mathcal A(\mathcal S)$. Then

(i).(Theorem 3.1 in \cite{FZ}, Laurent phenomenon)
 any cluster variable $x_{i;t}$ of $\mathcal A(\mathcal S)$ is a sum of Laurent monomials in ${\bf x}_{t_0}$ with coefficients in $\mathbb {ZP}$.

(ii). (\cite{GHKK}, positive  Laurent phenomenon)  any cluster variable $x_{i;t}$ of $\mathcal A(\mathcal S)$ is a sum of Laurent monomials in ${\bf x}_{t_0}$ with coefficients in $\mathbb {NP}$.
\end{Theorem}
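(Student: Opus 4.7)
The plan is to handle the two parts separately, since (i) is a purely algebraic/inductive statement while (ii) requires genuinely more structural input.

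For part (i), I would follow the classical strategy behind the Laurent phenomenon: fix $t_0$ and prove by induction on the distance $d(t_0,t)$ in $\mathbb{T}_n$ that every $x_{i;t}$ lies in $\mathbb{Z}\mathbb{P}[\mathbf{x}_{t_0}^{\pm 1}]$. The base case $t=t_0$ is trivial. The induction step cannot be done by one mutation at a time, since $\mu_k$ introduces a denominator $x_{k;t}$; instead the heart of the argument is what Fomin--Zelevinsky call the Caterpillar Lemma. One considers four adjacent seeds forming a ``caterpillar'' $t_0 \,\underline{k}\, t_1 \,\underline{\ell}\, t_2 \,\underline{k}\, t_3$ (or longer) and shows that an element of $\mathcal{F}$ that is a Laurent polynomial in each of the clusters $\mathbf{x}_{t_0},\mathbf{x}_{t_1},\mathbf{x}_{t_2},\mathbf{x}_{t_3}$ is already a Laurent polynomial in $\mathbf{x}_{t_0}$. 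The core technical fact is a coprimality statement: $x_{k;t_0}$ and $\mu_k(x_{k;t_0})$ are coprime in $\mathbb{Z}\mathbb{P}[x_{1;t_0}^{\pm 1},\ldots,\widehat{x_{k;t_0}},\ldots,x_{n;t_0}^{\pm 1}][x_{k;t_0}]$, because the exchange relation expresses $\mu_k(x_{k;t_0})x_{k;t_0}$ as a binomial with constant term in $x_{k;t_0}$ not divisible by $x_{k;t_0}$. Combining this with analogous coprimalities at the other edges lets one clear denominators step by step and conclude. The harder bookkeeping is organizing the induction so that the cluster at $t$ is reached by such a caterpillar from $t_0$, which reduces to reorganizing a walk in $\mathbb{T}_n$ into a sequence of length-three zigzags.

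For part (ii), the cleanest approach is via cluster scattering diagrams in the sense of Gross--Hacking--Keel--Kontsevich. One builds, from $B_{t_0}$, a consistent scattering diagram $\mathfrak{D}$ in $M_{\mathbb{R}}=\mathbb{R}^n$ whose walls carry wall-crossing automorphisms given by power series with \emph{non-negative} integer coefficients (these come from dilogarithm identities and are positive by construction). One then defines theta functions $\vartheta_q$ as sums over broken lines in $\mathfrak{D}$ ending at a chosen base point; each broken line contributes a monomial with a non-negative integer coefficient, so every $\vartheta_q$ is manifestly a Laurent polynomial in $\mathbf{x}_{t_0}$ with coefficients in $\mathbb{N}\mathbb{P}$. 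The final step is to identify each cluster variable $x_{i;t}$ with a theta function, which is done by tracking how the $g$-vector of $x_{i;t}$ sits in the cluster complex chamber of $\mathfrak{D}$ corresponding to $t$, and noting that in its own chamber $\vartheta_{g(x_{i;t})}$ equals the single monomial $\mathbf{x}_t^{g}$; transporting this identification back to the $t_0$-chamber via wall-crossings matches the mutation formula and recovers $x_{i;t}$.

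For the skew-symmetric case specifically (which is what Theorem~\ref{mainthm} actually needs), there is a more hands-on alternative to GHKK: use the Caldero--Chapoton / Derksen--Weyman--Zelevinsky formula. Realize $\mathcal{A}(\mathcal{S})$ via the cluster category or, in full generality, via decorated representations of a quiver with potential, and express each non-initial cluster variable $x_{i;t}$ as a sum over a quiver Grassmannian (or $F$-polynomial) times an explicit monomial in $\mathbf{x}_{t_0}$ and $\mathbf{y}_{t_0}$. Since each term in the sum is weighted by the Euler characteristic of a projective variety that is in fact a disjoint union of cells, the coefficients are honest non-negative integers, giving (ii) directly.

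The main obstacle in both routes is the same in spirit: making the combinatorics strong enough to see positivity and not merely integrality. In the Caterpillar approach to (i) the obstruction is just the coprimality lemma, which is elementary once set up. In (ii), whether one uses scattering diagrams or categorification, the real work is the consistency/positivity of the wall-crossings (respectively, the identification of coefficients with Euler characteristics of smooth projective cell-decomposed varieties); this is what fails for general skew-symmetrizable $B$ without additional input and is precisely the content one must import from \cite{GHKK}.
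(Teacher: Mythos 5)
The paper does not prove this statement at all: Theorem \ref{laurent} is imported background, with (i) attributed to Theorem 3.1 of \cite{FZ} and (ii) to \cite{GHKK}, and the authors rely on it as a black box in Section 3. So there is no ``paper's own proof'' to compare against; what you have written is a roadmap of the proofs in the cited sources, and as such it is broadly accurate. For (i), the Caterpillar Lemma plus coprimality of exchange binomials is indeed the Fomin--Zelevinsky argument, though be aware that the coprimality statements actually needed are slightly more delicate than ``$x_{k;t_0}$ and $\mu_k(x_{k;t_0})$ are coprime'' --- one also needs coprimality between the new variables produced at consecutive steps of the caterpillar, and this is where the normalization/primitivity hypotheses on the exchange polynomials enter. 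For (ii), your description of the GHKK route is correct in outline, but the phrase ``positive by construction'' undersells the hard part: positivity of the wall-crossing functions of the consistent scattering diagram is a theorem (proved by the change-of-lattice/perturbation argument), not a definition, and it is exactly the input that makes the broken-line expansion of a theta function manifestly in $\mathbb{NP}[\mathbf{x}_{t_0}^{\pm 1}]$; note also that GHKK establish this in the full skew-symmetrizable generality in which the theorem is stated here, so your closing remark that it ``fails for general skew-symmetrizable $B$'' is misleading unless you mean only that the more elementary categorical route (Caldero--Chapoton/DWZ) is restricted to the skew-symmetric case. At the level of detail given, your text is a survey of the literature rather than a self-contained proof, which is an entirely appropriate treatment of a statement the paper itself only quotes.
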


Denote by ${\bf x}_{t_0}^{\bf a}:=\prod\limits_{i=1}^nx_{i;t_0}^{a_i}$ for ${\bf a}\in \mathbb Z^n$. Let $\mathcal A(\mathcal S)$ be a skew-symmetrizable cluster algebra, and $({\bf x}_{t_0}, {\bf y}_{t_0}, B_{t_0})$ be a seed of  $\mathcal A(\mathcal S)$, by Laurent phenomenon,  any cluster variable $x$ of $\mathcal A(\mathcal S)$ has the form of $x=\sum\limits_{{\bf v}\in V} c_{\bf v}{\bf x}_{t_0}^{\bf v}$, where $V$ is a subset of $\mathbb Z^n$, $0\neq c_{\bf v}\in \mathbb {ZP}$. Let $-d_{j}$ be the minimal exponent of $x_{j;t_0}$ appearing in the expansion $x=\sum\limits_{{\bf v}\in V} c_{\bf v}{\bf x}_{t_0}^{\bf v}$. Then $x$ has the form of
\begin{eqnarray}
\label{eqd}x=\frac{f(x_{1;t_0},\cdots,x_{n,t_0})}{x_{1;t_0}^{d_1}\cdots x_{n;t_0}^{d_n}},
\end{eqnarray}
where $f\in\mathbb {ZP}[x_{1;t_0},\cdots,x_{n;t_0}]$ with  $x_{j;t_0}\nmid f$ for $j=1,\cdots,n$.
The vector $d^{t_0}(x) = (d_1,\cdots, d_n)^{\top}$ is called the {\bf denominator vector} (briefly, {\bf $d$-vector})  of the cluster variable $x$ with respect to ${\bf x}_{t_0}$.

\begin{Proposition}{\em ((7.7) in \cite{FZ3})}\label{dmutation}
 Denote the standard basis vectors in $\mathbb Z^n$ by ${\bf e}_1,\cdots,{\bf e}_n$. The vectors $d^{t_0}(x_{l;t})$ are uniquely determined by the initial conditions $$d^{t_0}(x_{l;t_0})=-{\bf e}_l$$
together with the recurrence relations

$$d^{t_0}(x_{l;t^\prime})=\begin{cases} d^{t_0}(x_{l;t})&\text{ if } l\neq k;\\
-d^{t_0}(x_{k;t})+max\left(\sum\limits_{b_{ik}^t> 0}b_{ik}^td^{t_0}(x_{i;t}),\sum\limits_{b_{ik}^t<0}-b_{ik}^td^{t_0}(x_{i;t})\right)& \text{ if } l=k \end{cases}$$
for  $t^{~\underline{\quad k \quad}}~ t^{\prime}$.
\end{Proposition}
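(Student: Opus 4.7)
The plan is to verify the recurrence directly from the exchange relation~(\ref{eq1}), tracking the minimum exponent of each initial variable $x_{m;t_0}$ via the positive Laurent phenomenon. The initial condition is immediate: $x_{l;t_0}$ has the trivial Laurent expansion in ${\bf x}_{t_0}$ given by the single monomial $x_{l;t_0}^1$, so the minimum exponent of $x_{l;t_0}$ is $+1$ and that of $x_{m;t_0}$ for $m\ne l$ is $0$, yielding $d^{t_0}(x_{l;t_0})=-{\bf e}_l$. When $l\ne k$, the mutation formula~(\ref{eq1}) gives $x_{l;t'}=x_{l;t}$, so $d^{t_0}(x_{l;t'})=d^{t_0}(x_{l;t})$ is trivial. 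Uniqueness of all the vectors $d^{t_0}(x_{l;t})$ then follows at once because $\mathbb{T}_n$ is a tree rooted at $t_0$ and the two clauses of the recurrence cover every edge.

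The substantive case is $l=k$. I would rewrite~(\ref{eq1}) as the exchange identity
\begin{equation*}
x_{k;t'}\cdot x_{k;t} \;=\; \frac{y_{k;t}\,\prod_{i=1}^n x_{i;t}^{[b_{ik}^t]_+} \;+\; \prod_{i=1}^n x_{i;t}^{[-b_{ik}^t]_+}}{1\oplus y_{k;t}},
\end{equation*}
expand both sides as Laurent polynomials in ${\bf x}_{t_0}$, and compare the minimum exponent of $x_{m;t_0}$ on each side. By the definition of the $d$-vector, the minimum exponent of $x_{m;t_0}$ in $x_{i;t}$ equals $-(d^{t_0}(x_{i;t}))_m$. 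Granting that no cancellation occurs at the minimum exponent, the two monomial-in-${\bf x}_t$ pieces of the numerator contribute minimum exponents $-\sum_{b_{ik}^t>0}b_{ik}^t(d^{t_0}(x_{i;t}))_m$ and $-\sum_{b_{ik}^t<0}(-b_{ik}^t)(d^{t_0}(x_{i;t}))_m$; their sum contributes the smaller of the two; the $\mathbb{P}$-scalar $(1\oplus y_{k;t})^{-1}$ does not alter any exponent of $x_{m;t_0}$; and the left side contributes $-(d^{t_0}(x_{k;t'}))_m-(d^{t_0}(x_{k;t}))_m$. Equating, using the identity $\min(-A,-B)=-\max(A,B)$, and isolating $(d^{t_0}(x_{k;t'}))_m$ yields exactly the stated recurrence componentwise.

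The main obstacle is the ``no cancellation'' claim. It is supplied by the positive Laurent phenomenon (Theorem~\ref{laurent}(ii)): each $x_{i;t}$ expands in ${\bf x}_{t_0}$ with coefficients in $\mathbb{NP}$, so any product or $\mathbb{N}\mathbb{P}$-linear combination of such expansions again has coefficients in $\mathbb{NP}$; and a sum of elements of $\mathbb{NP}$ can vanish only if each summand is zero, since $\mathbb{P}$ is a free $\mathbb{Z}$-basis of the domain $\mathbb{ZP}$. Consequently, (a) the minimum exponent of $x_{m;t_0}$ in a product of two $\mathbb{NP}$-Laurent expansions equals the sum of the two individual minima, and (b) the minimum exponent in a sum of two $\mathbb{NP}$-Laurent polynomials equals the minimum of the two individual minima. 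These two facts together justify the comparison in the preceding paragraph and close the proof.
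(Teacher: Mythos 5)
Your proof is correct, but there is nothing in the paper to compare it against: the paper does not prove Proposition \ref{dmutation} at all, it simply imports it as (7.7) of \cite{FZ3}. So you have supplied an argument where the authors supply only a citation. Your derivation is sound within the paper's framework, and you correctly put your finger on the one genuinely delicate point. For the left-hand side $x_{k;t'}x_{k;t}$ and for each of the two exchange monomials separately, the minimal $x_{m;t_0}$-exponent is additive simply because the Laurent polynomial ring over the domain $\mathbb{ZP}$ is a domain (the lowest-order coefficient of a product is the product of the lowest-order coefficients); positivity is not actually needed there. Where positivity is genuinely needed is in the sum of the two exchange monomials: one must rule out cancellation in the coefficient of the lowest power of $x_{m;t_0}$, and your appeal to Theorem \ref{laurent}(ii) (coefficients in $\mathbb{NP}$, and a sum of elements of $\mathbb{NP}$ vanishes only if every summand does) settles exactly that. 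The identity $\min(-A,-B)=-\max(A,B)$ then yields the stated componentwise recurrence, and uniqueness is immediate from $\mathbb{T}_n$ being a tree. The only caveat worth recording is that your route makes the recurrence logically dependent on the positivity theorem of \cite{GHKK}, which is far heavier input than was available when \cite{FZ3} stated (7.7); within this paper that is harmless since Theorem \ref{laurent}(ii) is already assumed, but you should be aware that the non-cancellation step is precisely the subtle point of (7.7), and that you are resolving it by invoking positivity rather than by a more elementary argument.
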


From the above proposition, we know that the notion of $d$-vectors is independent of the choice of coefficient system. So when studying the $d$-vector $d^{t_0}(x)$ of a cluster variable $x$, we can focus on the cluster algebras of geometric type.

\begin{Proposition}\label{prodvectors}
{\em (Proposition 2.5 in \cite{RS})}
 Let $\mathcal A(\mathcal S)$ be a cluster algebra, $x$ be a cluster variable, and $({\bf x}_t,{\bf y}_t,B_t)$, $({\bf x}_{t^\prime}, {\bf y}_{t^\prime},B_{t^\prime})$ be two seeds of  $\mathcal A(\mathcal S)$
with $({\bf x}_{t^\prime}, {\bf y}_{t^\prime},B_{t^\prime})=\mu_k({\bf x}_t,{\bf y}_t,B_t)$. Suppose that $d^t(x)=(d_1,\cdots,d_n)^{\top}$, $d^{t^\prime}(x)=(d_1^\prime,\cdots,d_n^\prime)$ are the $d$-vectors of $x$ with respective to ${\bf x}_t$ and ${\bf x}_{t^\prime}$ respectively, then $d_i=d_i^{\prime}$ for $i\neq k$.
\end{Proposition}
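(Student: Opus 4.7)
My approach is to compare the Laurent expansions of $x$ in ${\bf x}_t$ and ${\bf x}_{t'}$ as Laurent polynomials in the single variable $x_{k;t}$ (respectively $x_{k;t'}$) over the common ring
$$R := \mathbb{NP}[x_{i;t}^{\pm 1} : i\neq k] = \mathbb{NP}[x_{i;t'}^{\pm 1} : i\neq k],$$
using the mutation identity $x_{k;t}\,x_{k;t'}=M$ with
$$M = \frac{y_k\prod_{i\neq k}x_{i;t}^{[b_{ik}^t]_+} + \prod_{i\neq k}x_{i;t}^{[-b_{ik}^t]_+}}{1\oplus y_k} \in R,$$
together with the positive Laurent phenomenon (Theorem~\ref{laurent}(ii)). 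The crucial feature of $M$ is that, since $[b_{ik}^t]_+$ and $[-b_{ik}^t]_+$ are never both positive, the minimum exponent of each $x_{i;t}$ (with $i\neq k$) appearing in $M$ equals $0$.

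I would first write
$$x = \sum_{j=-d_k}^{N} g_j\, x_{k;t}^{\,j} \quad\text{and}\quad x = \sum_{j'=-d'_k}^{N'} g'_{j'}\, x_{k;t'}^{\,j'},$$
where each $g_j,g'_{j'}\in\mathbb{NP}[x_{i;t}^{\pm 1}:i\neq k]$ by Theorem~\ref{laurent}(ii). Writing $\alpha_i(j)$ (resp.\ $\alpha'_i(j')$) for the minimum $x_{i;t}$-exponent appearing in $g_j$ (resp.\ in $g'_{j'}$), we have $-d_i=\min_j\alpha_i(j)$ and $-d'_i=\min_{j'}\alpha'_i(j')$. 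Substituting $x_{k;t}=M/x_{k;t'}$ into the first expansion and multiplying by $M^{D}$ with $D=\max(0,d_k)$ turns it into the Laurent polynomial $\sum_j g_j M^{D+j} x_{k;t'}^{-j}\in R[x_{k;t'}^{\pm 1}]$; multiplying the second expansion by $M^D$ likewise lands in $R[x_{k;t'}^{\pm 1}]$. Equating the coefficients of $x_{k;t'}^{j'}$ and cancelling $M^D$ in the fraction field of $R$ produces the key identity
$$g'_{j'} = g_{-j'}\cdot M^{-j'} \qquad\text{for all } j'\in\mathbb{Z},$$
so in particular $g'_{j'}\neq 0\iff g_{-j'}\neq 0$, i.e.\ the supports correspond under $j'\leftrightarrow -j$.

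Finally I compare minimum $x_{i;t}$-exponents. When $-j'\ge 0$, the identity $g'_{j'}=g_{-j'}M^{-j'}$ is a product of two elements of $\mathbb{NP}[x_{i;t}^{\pm 1}:i\neq k]$; since $\mathbb{NP}$-coefficients admit no cancellations and the minimum $x_{i;t}$-exponent of $M^{-j'}$ is $0$, the minimum exponents are additive and yield $\alpha'_i(j')=\alpha_i(-j')$. When $-j'<0$, the symmetric identity $g_{-j'}=g'_{j'}M^{j'}$ gives the same equality by the same positivity argument. Therefore
$$-d'_i = \min_{j'}\alpha'_i(j') = \min_{j'}\alpha_i(-j') = \min_j\alpha_i(j) = -d_i,$$
which proves $d_i=d'_i$ for $i\neq k$. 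The main subtlety of the argument is the positivity step: without Theorem~\ref{laurent}(ii), multiplication by $M^{\pm 1}$ could lower the minimum $x_{i;t}$-exponent through hidden cancellations among leading terms, and the clean bookkeeping $\alpha'_i(j')=\alpha_i(-j')$ would break.
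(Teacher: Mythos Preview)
Your proof is correct and follows exactly the substitution strategy sketched in the paper: write $x_{k;t}=M/x_{k;t'}$, substitute into the Laurent expansion of $x$ with respect to ${\bf x}_t$, and track the minimum $x_{i;t}$-exponent for $i\neq k$, using that $M$ has minimum $x_{i;t}$-exponent~$0$. One remark: the positive Laurent phenomenon is not actually needed, because the $x_{i;t}$-adic valuation on the \emph{domain} $\mathbb{ZP}[x_{l;t}^{\pm 1}:l\neq k]$ already satisfies $v(fg)=v(f)+v(g)$ (the lowest-order coefficients $f_{v(f)}g_{v(g)}$ cannot vanish in a domain), so your identity $\alpha'_i(j')=\alpha_i(-j')$ holds with coefficients in $\mathbb{ZP}$ and the ``main subtlety'' you flag does not arise.
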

\begin{proof}
For the convenience of the readers, we give the sketch of the proof here. By  $({\bf x}_{t^\prime}, {\bf y}_{t^\prime},B_{t^\prime})=\mu_k({\bf x}_t,{\bf y}_t,B_t)$, we know that $x_{k;t}x_{k;t^\prime}=\frac{y_{k;t}}{1\oplus y_{k;t}}\prod\limits_{b_{ik}^{t}>0}x_{i;t}^{b_{ik}^{t}}+\frac{y_{k;t}}{1\oplus y_{k;t}}\prod\limits_{b_{ik}^{t}<0}x_{i;t}^{-b_{ik}^{t}}.$ Thus

\begin{eqnarray} \label{eqn2}
x_{k;t}=x_{k;t^\prime}^{-1}(\frac{y_{k;t}}{1\oplus y_{k;t}}\prod\limits_{b_{ik}^{t}>0}x_{i;t}^{b_{ik}^{t}}+\frac{y_{k;t}}{1\oplus y_{k;t}}\prod\limits_{b_{ik}^{t}<0}x_{i;t}^{-b_{ik}^{t}}).
\end{eqnarray}

By Laurent phenomenon, the expansion of $x$ with respect to ${\bf x}_t$ has the following form

\begin{eqnarray} \label{eqn3}
x=\frac{f(x_{1;t},\cdots,x_{n,t})}{x_{1;t}^{d_1}\cdots x_{n;t}^{d_n}},
\end{eqnarray}
where $f\in\mathbb {ZP}[x_{1;t},\cdots,x_{n;t}]$ with  $x_{j;t}\nmid f$ for $j=1,\cdots,n$.
 Then replacing the $x_{k;t}$ in the equation (\ref{eqn3}) by the one in the equation (\ref{eqn2}), we get the expansion of $x$ with respect to ${\bf x}_{t^\prime}$. And it can be seen that $d_i=d_i^{\prime}$ for $i\neq k$.
\end{proof}

\section{ The proof of Theorem \ref{mainthm}}

Let $\mathcal A(\mathcal S)$ be a cluster algebra and $z$ be a cluster variable of $\mathcal A(\mathcal S)$. Denote by $I(z)$  the set of clusters ${\bf x}_{t_0}$ of $\mathcal A(\mathcal S)$ such that $z$ is a cluster variable in ${\bf x}_{t_0}$. For two vertices $t_1,t_2$ of $\mathbb T_n$, let $l(t_1,t_2)$ be the distance between $t_1$ and $t_2$ in the $n$-regular tree $\mathbb T_n$.  For a cluster ${\bf x}_t$ of $\mathcal A(\mathcal S)$, define the distance  between $z$ and ${\bf x}_t$ as $dist(z,{\bf x}_t):=min\{l(t_0,t)|\; t_0\in I(z)\}$.

The following theorem is from \cite{LS}:
 \begin{Theorem}{\em (Theorem 4.1, Proposition 5.1 and 5.5 in \cite{LS})\label{Lee}}

 Let $\mathcal A(\mathcal S)$ be a skew-symmetric cluster algebra of geometric
type with a cluster ${\bf x}_t$. Let $z$ be a cluster variable of $\mathcal A(\mathcal S)$, and ${\bf x}_{t_0}$ be a cluster containing $z$ such that $dist(z,{\bf x}_t)=l(t_0,t)$. Let $\sigma$ be the unique
sequence in $\mathbb T_n$ relating the seeds at $t_0$ and $t$ in which $p,q$ denote the last two directions (clearly, $p\neq q$):
$$\sigma:\;\; t_0^{~\underline{  \quad k_1\quad   }}~ t_1^{~\underline{\quad k_2 \quad}}  ~\cdots ~t_{m-2}^{~\underline{\quad p=k_{m-1} \quad}}~t_{m-1}=u ^{~\underline{~\quad q=k_{m} \quad}}~ t_m=t.$$
For $e\neq p,q$, let $u^{~\underline{  \quad e \quad   }}~ v$ and $t^{~\underline{  \quad e \quad   }}~ w$ be the edges of $\mathbb T_n$ in the same direction $e$. Then,

(i) $z$ can be written as $z=P_t+Q_t$, with $P_t\in L_1:=\mathbb {NP}[x_{q;u},x_{p;t}^{\pm1};(x_{i;t}^{\pm1})_{i\neq p,q}]$, and $Q_t\in L_2:=\mathbb {NP}[x_{q;t},x_{p;t}^{\pm1};(x_{i;t}^{\pm1})_{i\neq p,q}]$.  Moreover, $P_t$ and $Q_t$ are unique up to $L_1\cap L_2$.

(ii)  There exists a Laurent monomial $F$ appearing in the expansion $z=P_t+Q_t$  such that the variable $x_{e;t},$  has nonnegative exponent in $F$.

(iii)   There exist $P_{1;t}\in L_3,P_{2;t}\in L_4,Q_{1;t}\in L_5,Q_{2;t}\in L_6$ such that  $P_{1;t}+P_{2;t}=P_t$ and $Q_{1;t}+Q_{2;t}=Q_t$, where
\begin{eqnarray}
 L_3&:=&\mathbb {NP}[x_{q;u},x_{e;v};(x_{i;t}^{\pm1})_{i\neq p,q}],~~~~L_4:=\mathbb {NP}[x_{q;u},x_{e;u};(x_{i;t}^{\pm1})_{i\neq p,q}],\nonumber\\
L_5&:=&\mathbb {NP}[x_{q;t},x_{e;t};(x_{i;t}^{\pm1})_{i\neq p,q}],~~~~L_6:=\mathbb {NP}[x_{q;t},x_{e;w};(x_{i;t}^{\pm1})_{i\neq p,q}].\nonumber
\end{eqnarray}
Thus $z$ has the form of $z=P_{1;t}+P_{2;t}+Q_{1;t}+Q_{2;t}$, where $P_{1;t},P_{2;t},Q_{1;t},Q_{2;t}$ are unique up to $L_3\cap L_4\cap L_5\cap L_6$.
 \end{Theorem}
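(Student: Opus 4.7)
The theorem decomposes a cluster variable $z$, expanded around a cluster ${\bf x}_t$, according to the last two mutation directions $p,q$ of a shortest path from the seed ${\bf x}_{t_0}$ containing $z$ to ${\bf x}_t$. My plan is to exploit the positive Laurent phenomenon (Theorem \ref{laurent}(ii)) simultaneously in the two neighboring clusters ${\bf x}_u$ and ${\bf x}_t$, and then match the two expansions via the exchange relation at $q$.

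For part (i), I would first expand $z$ as a positive Laurent polynomial in ${\bf x}_u$. Since ${\bf x}_u$ and ${\bf x}_t$ agree except at position $q$, each Laurent monomial has the form $C\cdot x_{q;u}^{a_q}\prod_{i\neq q}x_{i;t}^{a_i}$ with $C\in\mathbb{NP}$. I would collect the monomials with $a_q\geq 0$ into a candidate $P_t$; by construction, $P_t\in L_1$. To check that $Q_t:=z-P_t$ lies in $L_2$, I would use the exchange relation $x_{q;u}\,x_{q;t}=M_1+M_2$, where $M_1,M_2$ are monomials in $\{x_{i;t}\}_{i\neq q}$, to translate negative powers of $x_{q;u}$ into expressions involving $x_{q;t}$, and compare with the positive Laurent expansion of $z$ directly in ${\bf x}_t$. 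Matching the two expansions (up to $L_1\cap L_2$) is what forces $Q_t\in L_2$, and this is where the minimality of $dist(z,{\bf x}_t)$ enters: it rules out cancellations that would let the negative-$x_{q;u}$ portion of the ${\bf x}_u$-expansion involve terms outside $L_2$.

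For part (ii), the existence of a Laurent monomial $F$ with nonnegative exponent in $x_{e;t}$ is forced by the distance hypothesis. If every monomial in the positive expansion of $z$ in ${\bf x}_t$ had strictly negative $x_{e;t}$-exponent, then after mutating ${\bf x}_t$ at $e$ to obtain ${\bf x}_w$, the exchange relation $x_{e;t}x_{e;w}=($binomial$)$ would turn each such monomial into a Laurent expression in ${\bf x}_w$ whose $x_{e;w}$-degree is strictly smaller, and tracing how $l(t_0,\cdot)$ behaves under mutation at $e\neq p,q$ would contradict $dist(z,{\bf x}_t)=l(t_0,t)$. For part (iii), I would iterate the decomposition of (i) in the direction $e$: apply the same split to $P_t$ using the exchange relation at $e$ between ${\bf x}_u$ and ${\bf x}_v$, and to $Q_t$ using the exchange relation at $e$ between ${\bf x}_t$ and ${\bf x}_w$. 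This yields the four pieces $P_{1;t},P_{2;t},Q_{1;t},Q_{2;t}$ landing in $L_3,L_4,L_5,L_6$ respectively, with (ii) playing the role of ensuring the mixed-ring supports are actually realized.

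The main obstacle will be the matching step in (i): showing that the remainder $z-P_t$ really has nonnegative degree in $x_{q;t}$ rather than being a genuine Laurent polynomial. This is the heart of the argument, since it requires positivity (so that no cancellation between nonnegative coefficients can hide a negative $x_{q;t}$-exponent) together with the distance minimality (so that mutation at $q$ really moves us away from $z$'s cluster). I would expect the formal proof to set up an induction on $m=dist(z,{\bf x}_t)$, with the base cases $m\leq 2$ verified directly from the exchange relation, and the inductive step using the two positive Laurent expansions in ${\bf x}_u$ and ${\bf x}_t$ to propagate the decomposition.
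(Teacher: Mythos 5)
First, note that the paper does not prove this statement at all: it is imported verbatim from Lee and Schiffler \cite{LS} (Theorem 4.1 and Propositions 5.1 and 5.5 there), so the only meaningful comparison is with the original proof, which is a long induction on the length of the mutation sequence resting on explicit rank-two expansion formulas (Dyck-path combinatorics). Your sketch does not reproduce that, and it has a genuine gap exactly at the place you yourself flag as ``the heart of the argument.'' Splitting the positive ${\bf x}_u$-expansion of $z$ by the sign of the $x_{q;u}$-exponent and then substituting $x_{q;u}^{-1}=x_{q;t}/(M_1+M_2)$ does not produce an element of $L_2$: the exchange binomial $M_1+M_2$ lands in the denominator, so the negative-exponent part becomes a rational function rather than a Laurent polynomial in $x_{q;t}$ and the remaining variables, and positivity of the two separate expansions gives no control over the divisibility that would be needed. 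The assertion that $z$ admits a decomposition in which $x_{q;u}$ and $x_{q;t}$ \emph{both} appear polynomially with coefficients in $\mathbb{NP}$ is strictly stronger than the positive Laurent phenomenon in either cluster, and ``matching the two expansions'' is a restatement of the goal, not an argument for it.

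The same issue affects (ii) and (iii). For (ii) you argue that if every monomial had negative $x_{e;t}$-exponent one could mutate at $e$ and contradict $dist(z,{\bf x}_t)=l(t_0,t)$; but mutating at $e\neq p,q$ only yields $dist(z,{\bf x}_w)\leq l(t_0,t)+1$, which is no contradiction unless you already know how denominator vectors behave along such mutations --- and that is precisely what the present paper deduces \emph{from} this theorem (Lemma \ref{lempq} and Case (I) of the proof of the main theorem), so your argument is in danger of circularity. For (iii), iterating the split in direction $e$ presupposes a form of (i) that you have not established. An honest proof would have to reproduce the Lee--Schiffler induction with its rank-two formulas, or find a genuinely different route; as written, the proposal identifies the correct obstacles but does not overcome them.
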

\begin{Lemma}\label{lempq} Keep the notations in Theorem \ref{Lee}.
There exists a Laurent monomial in Laurent expansion of $z$ with respect ${\bf x}_t$ in which  $x_{e;t}$ appears with nonnegative exponent for $e\neq p,q$.
\end{Lemma}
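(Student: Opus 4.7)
The plan is to upgrade part (ii) of Theorem \ref{Lee}, which produces a monomial $F$ in the decomposition $z = P_t + Q_t$ with nonnegative $x_{e;t}$-exponent, to a statement about the pure Laurent expansion of $z$ in ${\bf x}_t$. By Proposition \ref{dmutation}, the $d$-vector is independent of the choice of coefficients, so I may work inside a cluster algebra of geometric type, which makes the positive Laurent phenomenon (Theorem \ref{laurent}(ii)) available.

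First I would split cases according to whether $F$ belongs to $Q_t \in L_2$ or to $P_t \in L_1$. In the first case $F = c\, x_{q;t}^{a} x_{p;t}^{b_p} \prod_{i\neq p,q} x_{i;t}^{b_i}$ with $c\in\mathbb{NP}$, $a\geq 0$ and $b_e\geq 0$, so $F$ is already a Laurent monomial in ${\bf x}_t$ with nonnegative $x_{e;t}$-exponent and nothing further is required.

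The main case is $F \in P_t$, where $F = c\, x_{q;u}^{a} x_{p;t}^{b_p} \prod_{i\neq p,q} x_{i;t}^{b_i}$ with $a\geq 0$ and $b_e\geq 0$. Here I would invoke the exchange relation for $t = \mu_q(u)$ to write
\[
x_{q;u} \;=\; \frac{c_1 \prod_{j\neq q} x_{j;t}^{[b_{jq}^{t}]_+} \;+\; c_2 \prod_{j\neq q} x_{j;t}^{[-b_{jq}^{t}]_+}}{x_{q;t}}, \qquad c_1,c_2\in\mathbb{NP}.
\]
Every Laurent monomial occurring in the expansion of $x_{q;u}^{a}$ in ${\bf x}_t$ therefore has coefficient in $\mathbb{NP}$, $x_{q;t}$-exponent $-a$, and nonnegative exponent on every $x_{i;t}$ with $i\neq q$; in particular on $x_{e;t}$. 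Multiplying by the remaining monomial factor $c\, x_{p;t}^{b_p} \prod_{i\neq p,q} x_{i;t}^{b_i}$, each Laurent monomial in the expansion of $F$ in ${\bf x}_t$ has $x_{e;t}$-exponent at least $b_e\geq 0$.

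To close the argument, I would use Theorem \ref{laurent}(ii) to rule out cancellation: expanding $P_t + Q_t$ in ${\bf x}_t$ (after substituting $x_{q;u}$ as above) writes $z$ as a sum of Laurent polynomials in ${\bf x}_t$ with $\mathbb{NP}$-coefficients, and since $\mathbb{NP}$ has no additive inverses, the contribution of any Laurent monomial arising from $F$ survives in the Laurent expansion of $z$ in ${\bf x}_t$. This yields a Laurent monomial of $z$ in ${\bf x}_t$ whose $x_{e;t}$-exponent is nonnegative, as desired. The main difficulty will be the $P_t$-case: it requires the explicit exchange formula (to control the $x_{e;t}$-exponent under the substitution $x_{q;u}\mapsto$ positive Laurent in ${\bf x}_t$) together with the positive Laurent phenomenon (to rule out cancellation between the expansions of $P_t$ and $Q_t$).
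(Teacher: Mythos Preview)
Your argument is correct and follows essentially the same route as the paper: start from the monomial $F$ produced by Theorem~\ref{Lee}(ii), substitute the exchange relation for $x_{q;u}$, and observe that the $x_{e;t}$-exponent stays nonnegative. You are in fact more careful than the paper, which glosses over the cancellation issue with ``it is easy to see''; one small remark is that the no-cancellation step already follows from the $\mathbb{NP}$-coefficients guaranteed by Theorem~\ref{Lee}(i), so the separate appeal to Theorem~\ref{laurent}(ii) (and the preliminary reduction to geometric type via Proposition~\ref{dmutation}) is not needed here.
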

\begin{proof}
By Theorem \ref{Lee} (ii), there exists a Laurent monomial $F$ appearing in the expansion $z=P_t+Q_t$  such that the variable $x_{e;t}$  has non-negative exponent in $F$.
We know $$x_{q;u}=\frac{y_{q;t}\prod\limits_{j=1}^nx_{j;t}^{[b_{jq}^t]_+}+ \prod\limits_{j=1}^nx_{j;t}^{[-b_{jk}^t]_+}}{({1\bigoplus y_{q;t}})x_{q;t}}.$$
Substituting the above equality into $z=P_t+Q_t$, then we obtain the Laurent expansion of $z$ with respect to ${\bf x}_t$. It is easy to see that in this Laurent expansion there exists a Laurent monomial such that  $x_{e;t}$ appear in this Laurent monomial with nonnegative exponent.
\end{proof}

{\em Now we give  the proof of Theorem \ref{mainthm}.}

\begin{proof}
By Proposition \ref{dmutation}, we know the notion of $d$-vectors is independent of the choice of coefficient system. Hence, we can assume that  $\mathcal A(\mathcal S)$ is a skew-symmetric cluster algebra of geometric type.

Let ${\bf x}_{t^{\prime}}$ be a cluster of $\mathcal A(\mathcal S)$, $z$ be a cluster
variable with $z\notin {\bf x}_{t^{\prime}}$, then $dist(z,{\bf x}_{t^{\prime}})=:m+1>0$. We will show the $d$-vector $d^{t^{\prime}}(z)$ of $z$ with respect to ${\bf x}_{t^{\prime}}$ is in $\mathbb N^n$.

 Let ${\bf x}_{t_0}\in I(z)$ such that $l(t_0,t^{\prime})=dist(z,{\bf x}_{t^{\prime}})=m+1>0$.  Since $\mathbb T_n$ is a tree, there is  an unique sequence $\sigma$ linking $t_0$ and $t^{\prime}$ in $\mathbb T_n$:
$$t_0^{~\underline{  \quad k_1\quad   }}~ t_1^{~\underline{\quad k_2 \quad}}  ~\cdots ~t_{m-2}^{~\underline{\quad p=k_{m-1} \quad}}~t_{m-1}=u ^{~\underline{~\quad q=k_{m} \quad}}~ t_m=t~^{~\underline{\quad k_{m+1}(\neq q) \quad}}~t_{m+1}=t^{\prime}.$$
By the choice of ${\bf x}_{t_0}$, we know $dist(z,{\bf x}_{t_j})=l(t_0,t_j)=j$ for $j=1,2,\cdots,m+1$.

We prove  $d^{t^{\prime}}(z)\in\mathbb N^n$ by induction on $dist(z,{\bf x}_{t^{\prime}})=m+1>0$.

 Clearly,  $d^{t^{\prime}}(z)\in\mathbb N^n$ if $dist(z,{\bf x}_{t^{\prime}})=1,2$.

 {\em Inductive Assumption}:  Assume that $d^{t^{\star}}(z)\in\mathbb N^n$ for any $t^{\star}$ such that $1\leq dist(z,{\bf x}_{t^{\star}})<dist(z, {\bf x}_{t^{\prime}})$.

Because $1\leq dist(z,{\bf x}_t)<dist(z, {\bf x}_{t^{\prime}})$, we know $d^{t}(z)\in\mathbb N^n$, by inductive assumption. By Proposition \ref{prodvectors} and ${\bf x}_{t^{\prime}}=\mu_{k_{m+1}}({\bf x}_t)$, we get that the $i$-th component of $d^{t}(z)$ and the $i$-th component of $d^{t^{\prime}}(z)$ are equal for $i\neq k_{m+1}$. Thus, in order to show $d^{t^{\prime}}(z)\in\mathbb N^n$, we only need to show that the $k_{m+1}$-th component of $d^{t^{\prime}}(z)$ is nonnegative.
We prove this in two cases. Case (I): $k_{m+1}\neq p$ and Case (II): $k_{m+1}=p$.

The proof in Case (I):

By positive Laurent phenomenon, the expansion of $z$ with respect to ${\bf x}_{t^{\prime}}$ has the form of $z=\sum\limits_{{\bf v}\in V}c_{\bf v}{\bf x}_{t^{\prime}}^{\bf v}$, where $V$ is a subset of $\mathbb Z^n$, and $0\neq c_{\bf v}\in \mathbb {NP}$. If $k_{m+1}$-th component of $d^{t^{\prime}}(z)$ is negative (here $k_{m+1}\neq p,q$), then  the exponent of $x_{k_{m+1};t^{\prime}}$ in each ${\bf x}_{t^{\prime}}^{\bf v}$ must be positive, ${\bf v}\in V$. We know $$x_{k_{m+1};t^{\prime}}=\frac{y_{k_{m+1};t^{\prime}}\prod\limits_{j=1}^nx_{j;t^{\prime}}^{[b_{jk_{m+1}}^{t^{\prime}}]_+}+ \prod\limits_{j=1}^nx_{j;t^{\prime}}^{[-b_{jk_{m+1}}^{t^{\prime}}]_+}}{({1\bigoplus y_{k_{m+1};t^{\prime}}})x_{k_{m+1};t}}.$$
Substituting the above equation into $z=\sum\limits_{{\bf v}\in V}c_{\bf v}{\bf x}_{t^{\prime}}^{\bf v}$, we can obtain the Laurent expansion of $z$ with respect to ${\bf x}_t$. And the exponent of $x_{k_{m+1};t}$ in each Laurent monomial appearing in the obtained  Laurent expansion is negative. This contradicts Lemma \ref{lempq}.
Thus if $k_{m+1}\neq p,q$, then $k_{m+1}$-th component of $d^{t^{\prime}}(z)$ is nonnegative and we have  $d^{t^{\prime}}(z)$ is in $\mathbb N^n$.

The preparation for the proof in Case (II):

Consider the maximal rank two mutation subsequence in directions $p,q$ at the end of $\sigma$. This sequence connects $t^{\prime}$ to a vertex $t_r$.  Thus we have the following two casesㄩ
\begin{eqnarray}\label{eqnseq1}
t_0\cdots \cdot~t_{r-1}^{~\underline{  \quad k_r(\neq q)\quad   }}~ t_r^{~\underline{\quad p \quad}}~t_{r+1}^{~\underline{\quad q \quad}}~t_{r+2}^{~\underline{\quad p \quad}} ~\cdots ~t_{m-2}^{~\underline{\quad p \quad}}~t_{m-1}^{~\underline{\quad q \quad}}~t_m^{~\underline{~\quad p \quad}}~t_{m+1}=t^{\prime}.
\end{eqnarray}
 or
\begin{eqnarray}\label{eqnseq2}
t_0\cdots \cdot~t_{r-1}^{~\underline{  \quad k_r(\neq p)\quad   }}~ t_r^{~\underline{\quad q \quad}}~t_{r+1}^{~\underline{\quad p \quad}}~t_{r+2}^{~\underline{\quad q \quad}}~\cdots ~t_{m-2}^{~\underline{\quad p \quad}}~t_{m-1}^{~\underline{\quad q \quad}}~t_m^{~\underline{~\quad p \quad}}~t_{m+1}=t^{\prime}.
\end{eqnarray}

Let $t_r^{~\underline{\quad q \quad}}~t_{a}$  in the first case and let $t_r^{~\underline{\quad p \quad}}~t_{b}$  in the second case. It is easy to see that $r\leq m-2$ in the first case and $r\leq m-3$ in the second case.

\begin{Lemma}\label{lempositive}
(i) In the first case $t_r^{~\underline{\quad q \quad}}~t_{a}$,  the $p$-th components of $d$-vectors of $x_{p;t_a},x_{q;t_a},x_{p;t_r},x_{p;t_{r+1}}$, $x_{q;t_{r+1}},x_{q;t_{r+2}}$ with respect to ${\bf x}_{t^{\prime}}$
are nonnegative.

(ii) In the second case $t_r^{~\underline{\quad p \quad}}~t_{b}$,  the $p$-th components of $d$-vectors of $x_{p;t_b},x_{q;t_b},x_{q;t_r},x_{p;t_{r+1}},$ $x_{q;t_{r+1}}, x_{p;t_{r+2}}$ with respect to ${\bf x}_{t^{\prime}}$
are nonnegative.
\end{Lemma}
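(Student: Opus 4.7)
The overall strategy is strong induction on distance: although the inductive assumption in the proof of Theorem \ref{mainthm} is stated for $z$ specifically, the induction is really being run on $dist(\cdot,{\bf x}_{t'})$, so we may invoke $d^{t'}(y)\in\mathbb N^n$ for any cluster variable $y$ with $1\le dist(y,{\bf x}_{t'})<m+1$.

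In case (i), I would first reduce the six listed variables to distinct representatives: since $t_a=\mu_q(t_r)$ and $p\neq q$, we have $x_{p;t_a}=x_{p;t_r}$; and since $t_{r+1}=\mu_p(t_r)$ and $q\neq p$, we have $x_{q;t_{r+1}}=x_{q;t_r}$. So the distinct variables are $x_{p;t_r}$, $x_{q;t_r}$, $x_{p;t_{r+1}}$, $x_{q;t_{r+2}}$ and $x_{q;t_a}$. The non-backtracking of the path $\sigma$ in $\mathbb T_n$ combined with the maximality of the alternating tail forces $k_r\notin\{p,q\}$, so $r\ge 1$ and $l(t_r,t')=m+1-r\le m$.

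For each of the first four variables, a containing cluster at distance $\le m$ from ${\bf x}_{t'}$ is immediate: $x_{p;t_r},x_{q;t_r}\in{\bf x}_{t_r}$, $x_{p;t_{r+1}}\in{\bf x}_{t_{r+1}}$, and $x_{q;t_{r+2}}\in{\bf x}_{t_{r+2}}$. Applying the strong inductive hypothesis to each such $y$ yields $d^{t'}(y)\in\mathbb N^n$, and in particular nonnegativity of the $p$-th component (subject to the check that $y\neq x_{p;t'}$, otherwise the $p$-th component would be $-1$; this is guaranteed by the structure of the rank-two tail of $\sigma$, where $x_{p;t'}$ sits at the opposite end from $x_{p;t_r}$ and $x_{p;t_{r+1}}$).

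The main obstacle is $x_{q;t_a}$, which lies in ${\bf x}_{t_a}$ at distance $l(t_a,t')=m+2-r$ from ${\bf x}_{t'}$. For $r\ge 2$ this is still $\le m$ and induction applies at once; for $r=1$ the distance equals $m+1$ and the inductive hypothesis is unavailable, which is where I expect the technical work to lie. My plan in this edge case is first to compute $d^{t_r}(x_{q;t_a})={\bf e}_q$ directly from the exchange relation at $t_r$ (so the $p$-th component already vanishes), and then to transport this information to $t'$ along $t_r\to t_{r+1}\to\cdots\to t'$: Proposition \ref{prodvectors} preserves the $p$-th component across every $\mu_q$-step, while each $\mu_p$-step is analysed via the recurrence of Proposition \ref{dmutation} combined with the strong inductive hypothesis applied to the intermediate cluster variables entering the max formula. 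Case (ii) follows by an essentially symmetric argument with $p,q$ swapped, the analogous delicate case arising at $x_{p;t_b}$.
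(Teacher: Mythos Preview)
Your approach differs from the paper's, and the step you treat as routine is actually the heart of the lemma.

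The paper does not invoke the inductive hypothesis here at all. Instead it observes that each of the six listed variables lies in a cluster reachable from ${\bf x}_{t'}$ by mutations in directions $p,q$ only, so rank-$2$ positivity of $d$-vectors (Theorem~6.1 of \cite{FZ}) applies directly: any such variable whose $p$-th $d$-component with respect to ${\bf x}_{t'}$ is negative must equal $x_{p;t'}$ itself. The entire substance of the proof is then to exclude $x=x_{p;t'}$. If that equality held, the cluster ${\bf x}_{t_E}$ containing $x$ would share $n-1$ cluster variables with ${\bf x}_{t'}$, and by Theorem~5 of \cite{GSV} (or Theorem~4.23(c) of \cite{CL}) the two clusters would coincide or differ by a single mutation in direction $p$ or $q$; a short distance count then contradicts the minimality $dist(z,{\bf x}_{t'})=m+1$ of $\sigma$.

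Your inductive route lands on exactly the same obstruction---you must exclude $y=x_{p;t'}$---but you dismiss it with ``guaranteed by the structure of the rank-two tail of $\sigma$''. That is not a proof: in rank-$2$ finite type the alternating $p,q$ sequence of cluster variables is periodic, so nothing a priori prevents, say, $x_{p;t_r}$ or $x_{q;t_{r+2}}$ from coinciding with $x_{p;t'}$. Ruling this out is precisely the GSV/CL-plus-geodesic argument the paper supplies and you omit. Separately, your edge case $r=1$ for $x_{q;t_a}$ is an artefact of going through induction (rank-$2$ positivity treats $x_{q;t_a}$ on the same footing as the other five), and the transport plan you sketch there is not sound as stated: Proposition~\ref{dmutation} describes how $d^{t_0}(\cdot)$ evolves as the \emph{variable} is mutated with the base $t_0$ fixed, not how $d^{(\cdot)}(x)$ evolves as the \emph{base cluster} moves, so it gives you no direct control over the $p$-th component across a $\mu_p$-step of the base.
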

\begin{proof}
(i) If the $p$-th components of $d$-vector $d^{t^{\prime}}(x)$ of some cluster variable $x\in\{x_{p;t_a},x_{q;t_a},x_{p;t_r},$ $x_{p;t_{r+1}},x_{q;t_{r+1}},x_{q;t_{r+2}}\}$  with respect to ${\bf x}_{t^{\prime}}$ is negative. Say $x\in {\bf x}_{t_E}, E\in\{a,r,r+1,r+2\}$.
Since the clusters  ${\bf x}_{t_a},{\bf x}_{t_r},{\bf x}_{t_{r+1}}$, ${\bf x}_{t_{r+2}}$  can be obtained from the cluster ${\bf x}_{t^{\prime}}$  by  sequences of mutations using only  two directions $p$ and $q$. Then by Theorem 6.1 in \cite{FZ}, we know $x=x_{p;t^{\prime}}$. Hence, ${\bf x}_{t_E}$ and ${\bf x}_{t^{\prime}}$ have at least $n-1$ common cluster variables. By Theorem 5 of \cite{GSV} or Theorem 4.23 (c) of \cite{CL}, we know ${\bf x}_{t^{\prime}}={\bf x}_{t_E}$ or ${\bf x}_{t^{\prime}}=\mu_p({\bf x}_{t_E})$ or ${\bf x}_{t^{\prime}}=\mu_q({\bf x}_{t_E})$.
Thus
\begin{eqnarray}\label{eqnless}
m+1=dist(z,{\bf x}_{t^{\prime}})\leq max\{dist(z,{\bf x}_{t_E}),dist(z,\mu_p({\bf x}_{t_E}),dist(z,\mu_q({\bf x}_{t_E})\}.\nonumber
\end{eqnarray}

 On the other hand, by the sequence (\ref{eqnseq1}) and $t_r^{~\underline{\quad q \quad}}~t_{a}$, we know $$max\{dist(z,{\bf x}_{t_E}),dist(z,\mu_p({\bf x}_{t_E}),dist(z,\mu_q({\bf x}_{t_E})\}\leq r+3\leq m+1.$$
Thus we obtain $max\{dist(z,{\bf x}_{t_E}),dist(z,\mu_p({\bf x}_{t_E}),dist(z,\mu_q({\bf x}_{t_E})\}=m+1$, and this will result in that $r=m-2$ and $E=m$, i.e. ${\bf x}_{t_E}={\bf x}_{t_m}$. But this  contradicts that $x_{p;t^{\prime}}=x\in {\bf x}_{t_E}={\bf x}_{t_m}$. So  the $p$-th components of $d$-vectors of $x_{p;t_a},x_{q;t_a},x_{p;t_r},x_{p;t_{r+1}}$, $x_{q;t_{r+1}},x_{q;t_{r+2}}$ with respect to ${\bf x}_t^{\prime}$ are nonnegative.

(ii) By the same argument with (i), we have
\begin{eqnarray}\label{mmmm}
m+1=dist(z,{\bf x}_{t^{\prime}})\leq max\{dist(z,{\bf x}_{t_E}),dist(z,\mu_p({\bf x}_{t_E}),dist(z,\mu_q({\bf x}_{t_E})\},
\end{eqnarray}
 and, on the other hand,
$$max\{dist(z,{\bf x}_{t_E}),dist(z,\mu_p({\bf x}_{t_E}),dist(z,\mu_q({\bf x}_{t_E})\}\leq r+3\leq m,$$
 which contradicts to (\ref{mmmm}). So, the $p$-th components of $d$-vectors of $x_{p;t_b},x_{q;t_b},x_{q;t_r},x_{p;t_{r+1}}$, $x_{q;t_{r+1}}, x_{p;t_{r+2}}$ with respect to ${\bf x}_{t^{\prime}}$ are nonnegative.
\end{proof}

{\em Return to the proof of Theorem \ref{mainthm}.}

The proof in case (II):

 Applying Theorem \ref{Lee} (iii) at the vertex $t_{r+1}$ with respect to the directions $q$ (in case of (\ref{eqnseq1})) and $p$ (in case of (\ref{eqnseq2})), we get that either
$$z\in\mathbb{NP}[x_{p;t_a},x_{q;t_a},x_{p;t_r},x_{p;t_{r+1}},x_{q;t_{r+1}},x_{q;t_{r+2}};(x_{i;t_{r+1}}^{\pm1})_{i\neq p,q}],$$
or
$$z\in\mathbb{NP}[x_{p;t_b},x_{q;t_b},x_{q;t_r},x_{p;t_{r+1}},x_{q;t_{r+1}},x_{p;t_{r+2}};(x_{i;t_{r+1}}^{\pm1})_{i\neq p,q}].$$
Then by Lemma \ref{lempositive},  the $p$-th component of $d^{t^{\prime}}(z)$ is nonnegative and so,  $d^{t^{\prime}}(z)$ is in $\mathbb N^n$.
\end{proof}
{\bf Acknowledgements:}\; This project is supported by the National Natural Science Foundation of China (No.11671350 and No.11571173).


\end{document}